\documentclass[12pt]{article}

\usepackage{amsmath, amsfonts, amssymb}
\usepackage{mathrsfs}
\usepackage{theorem}
\usepackage{bm}
\usepackage{nicefrac}

\pagestyle{plain}
\setlength{\topmargin}{0cm}
\setlength{\oddsidemargin}{0cm}
\setlength{\evensidemargin}{0cm}
\setlength{\textwidth}{ 16 cm}\setlength{\textheight}{20 cm}

\allowdisplaybreaks

\newtheorem{mythm}{Theorem}[section]
\newtheorem{myprop}[mythm]{Proposition}
\newtheorem{mylem}[mythm]{Lemma}

{\newtheorem{myrem}[mythm]{Remark}}
{}

\newcommand{\ra}{\rightarrow}
\newcommand{\dis}{\displaystyle}
\def\R{\mathbb R}

\def\S{\mathcal S}

\def\F{\mathscr F}
\def\d{\text{\rm{d}}}
\def\E{\mathbb E}

\def\e{\text{\rm{e}}}

\def\La{\Lambda}
\def\veps{\varepsilon}

\def\wt{\widetilde}

\newcommand{\fin}{\hfill $\square$\par}
\newenvironment{proof}{{\noindent\it Proof.}\ }{\hfill $\square$\par}

\numberwithin{equation}{section}

\def\D{\mathcal{D}}
\def\J{\mathcal{J}}
\def\Di{\D^{(i)}}
\def\Ji{\J^{(i)}}
\def\A{\mathcal{A}}
\def\ii{\hat{\imath}}
\def\sgn{\mathrm{sgn}}

\begin{document}

\title{Long time behavior of L\'{e}vy-driven Ornstein-Uhlenbeck process with regime-switching \footnote{Supported in
 part by NNSFs of China (Nos. 11771327, 11701588, 11431014, 11831014)}}
\author{ Zhong-Wei Liao \footnote{South China Research Center for Applied Mathematics and Interdisciplinary Studies, South China Normal University, Guangzhou 510631, China.  (zhwliao@hotmail.com)}, Jinghai Shao \thanks{Center for Applied Mathematics, Tianjin University, Tianjin 300072, China. (shaojh@tju.edu.cn.)}}
\maketitle

\begin{abstract}
  In this work we investigate the long time behavior of the Ornstein-Uhlenbeck process driven by L\'evy noise with regime-switching. We provide explicit criteria on the transience and recurrence of this process. Contrasted with the Ornstein-Uhlenbeck process driven simply by Brownian motion, whose stationary distribution must be light-tailed, both the jumps caused by the L\'evy noise and regime-switching described by Markov chain can derive the heavy-tailed property of the stationary distribution. In this work, the different role played by L\'evy measure and   regime-switching process is clearly characterized.
\end{abstract}

\noindent AMS subject Classification (2010):\ 60J75, 60K37, 60J60

\noindent \textbf{Keywords}: Ornstein-Uhlenbeck Process, L\'evy process, Regime-switching, Recurrence

\section{Introduction} \label{sect1}

In this work we are concerned with the existence of stationary distributions of regime-switching processes driven by L\'evy noises, and further analyze the tail behavior of these distributions.
This work is motivated by the works of de Saporta and Yao \cite{DY} and Bardet et al. \cite{BGM10}. It has been shown in different ways that in contrast with the classical Ornstein-Uhlenbeck process whose stationary distribution must be light-tailed, the stationary distribution of the Ornstein-Uhlenbeck process with regime-switching  may have a heavy-tailed stationary distribution. Moreover, in a recent work \cite{HS17}, Hou and Shao showed that the Cox-Ingersoll-Ross (CIR) process with regime-switching may have a heavy-tailed stationary distribution, which provides a suitable modification of the classical CIR model due to its poor empirical performance shown by Brown and Dybvig \cite{BD}. Hence, from the  application viewpoint, it is quite necessary to study the stochastic processes with regime-switching.

The L\'evy-driven Ornstein-Uhlenbeck process has a lot of applications in the study of mathematical finance (cf. e.g. \cite{BMR01, BS01, CPY, LM05, Nov03} amongst others).
In this work, we shall extend the study of \cite{DY} and \cite{BGM10} to the L\'evy-driven Ornstein-Uhlenbeck process with regime-switching. We shall provide explicit conditions to justify the recurrence and transience of such process, and find out the role played by random change of the environment. Furthermore, provided the existence of stationary distribution, we provide explicit conditions to justify whether it is light- or heavy-tailed. In this part, we are mainly interested in finding out the different role played by L\'evy jumps and regime-switching in determining the heavy-tailed property of the stationary distribution.

Precisely, we consider the following stochastic processes $(X_t,\La_t)$:
\begin{equation}\label{1.1}
\d X_t=\alpha_{\La_t}X_t\d t+\sigma_{\La_t}\d Z_t,\qquad X_0=x_0\in \R,
\end{equation}
and $(\La_t)$ is a continuous-time Markov chain on a finite state space $\S=\{1,2,\ldots,N\}$, $2\leq N<\infty$, with the transition rate matrix $Q=(q_{ij})_{i,j\in \S}$, which is assumed to be conservative and irreducible. Here $\alpha:\S\to \R$, $\sigma:\S\to \R$ are measurable functions, and $Z=(Z_t)_{t\geq 0}$ is a L\'evy processes on $\R$ given by
\begin{equation}\label{1.2}
Z_t=b t+\sqrt{a} B_t+\int_0^t\int_{0<|z|<1} z\wt N(\d t,\d z)+\int_0^t \int_{|z|\geq 1} zN(\d t,\d z),
\end{equation}
where $b\in \R$, $a>0$, $B=(B_t)_{t\geq 0}$ is a standard 1-dimension Brownian motion, $N(\d t,\d z)$ is a Poisson random measure on $[0,\infty)\times \R\backslash\{0\}$, and $\wt N(\d t,\d z)$ is its corresponding compensated Poisson random measure relative to the L\'evy measure $\nu$, i.e. $\wt N(\d t,\d z)=N(\d t,\d z)-\nu(\d z)\d t$. Throughout this work, it is assumed that $(\La_t)$ is independent of the L\'evy process $(Z_t)$. As a L\'evy measure, $\nu$ is a $\sigma$-finite measure on $\R$ satisfying $\nu(\{0\})=0$ and
\begin{equation}\label{1.3}
\int_{z\neq 0}\big(1\wedge |z|^2\big)\,\nu(\d z)<\infty.
\end{equation}

Since the Markov chain $(\La_t)$ is in a finite state space with a conservative and  irreducible $Q$-matrix, there exists a unique stationary distribution, denoted by $\mu=(\mu_i)_{i\in \S}$.
Our first result is on the recurrent property of $(X_t,\La_t)$.
\begin{mythm}\label{t-1}
\begin{itemize}
\item[$(i)$] Assume that
\begin{equation}\label{a-1}
\int_{|z|\neq 0} \log\big(1+|z|\big)\nu(\d z)<\infty,
\end{equation}
then $(X_t,\La_t)$ is positive recurrent if $\sum_{i\in\S} \mu_i \alpha_i<0$.
\item[$(ii)$] Assume that
\begin{equation}\label{a-1.5}
\int_{|z|\neq 0} |z|\vee |z|^2\nu(\d z)<\infty,
\end{equation} then $(X_t,\La_t)$ is transient if $\sum_{i\in \S} \mu_i\alpha_i>0$.
\end{itemize}
\end{mythm}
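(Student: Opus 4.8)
The plan is to prove both parts by exhibiting Foster--Lyapunov functions for the two-component Markov process $(X_t,\La_t)$, whose extended generator acts on suitably smooth $f:\R\times\S\to\R$ by
\[
\mathcal Lf(x,i)=(\alpha_ix+b\sigma_i)\partial_xf(x,i)+\frac{a\sigma_i^2}{2}\partial_{xx}f(x,i)+\int_{z\neq0}\!\big[f(x+\sigma_iz,i)-f(x,i)-\sigma_iz\,\partial_xf(x,i)\mathbf 1_{\{0<|z|<1\}}\big]\nu(\d z)+\sum_{j\in\S}q_{ij}f(x,j).
\]
Under \eqref{1.3} and the linear growth of the coefficients, \eqref{1.1}--\eqref{1.2} has a unique non-explosive solution, and the non-degeneracy $a>0$ together with the irreducibility of $Q$ will be used to guarantee that the process is irreducible and that compact subsets of $\R\times\S$ are petite, so that the classical Khasminskii/Meyn--Tweedie drift criteria apply once the generator estimates below are available.

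For part $(i)$, put $\bar\alpha:=\sum_{i\in\S}\mu_i\alpha_i<0$ and let $\beta=(\beta_i)_{i\in\S}$ solve the Poisson equation $Q\beta=2\bar\alpha\mathbf 1-2\alpha$, which is solvable because $\sum_i\mu_i(2\bar\alpha-2\alpha_i)=0$. Consider $V_1(x,i)=\log(\e+x^2)+\beta_i+c_0$ with $c_0$ large enough that $V_1\geq1$; then $V_1(x,i)\to\infty$ as $|x|\to\infty$. In $\mathcal LV_1(x,i)$ the first-order (drift) and second-order (Brownian) terms tend, as $|x|\to\infty$, to $2\alpha_i$ and $0$, and the small-jump part of the integral tends to $0$, while the large-jump part is controlled by
\[
\Big|\log\frac{\e+(x+\sigma_iz)^2}{\e+x^2}\Big|\leq C\big(1+\log(1+|z|)\big)\qquad(|z|\geq1)
\]
uniformly in $x$, whose right-hand side is $\nu$-integrable on $\{|z|\geq1\}$ precisely by \eqref{a-1}, so dominated convergence sends this term to $0$ too. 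Since the switching term equals $(Q\beta)(i)$, we obtain $\mathcal LV_1(x,i)\to 2\alpha_i+(Q\beta)(i)=2\bar\alpha<0$, uniformly over the finite set $\S$. Hence $\mathcal LV_1\leq\bar\alpha<0$ outside a compact set while $\mathcal LV_1$ is bounded on compacts, which is the Foster--Lyapunov condition for positive (Harris) recurrence.

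For part $(ii)$, let $\eta(\theta)$ be the principal Perron--Frobenius eigenvalue of $Q-\theta\,\diag(\alpha)$ and $u_\theta>0$ a corresponding right eigenvector; since $\eta(0)=0$ and $\eta'(0)=-\bar\alpha<0$ (pair the differentiated eigenvalue equation with $\mu$), we may fix $\theta\in(0,1)$ so small that $\eta(\theta)<0$. Take $V_2(x,i)=u_\theta(i)(1+x^2)^{-\theta/2}$, which is strictly positive, bounded, and $\to0$ as $|x|\to\infty$. As above, the Brownian and small-jump contributions to $\mathcal LV_2(x,i)$ are $o\big((1+x^2)^{-\theta/2}\big)$; for the large jumps, multiplying by $(1+x^2)^{\theta/2}$ and using
\[
\Big|\Big(\frac{1+x^2}{1+(x+\sigma_iz)^2}\Big)^{\theta/2}-1\Big|\leq C|z|^{\theta}\qquad(|z|\geq1),
\]
whose right-hand side is $\nu$-integrable on $\{|z|\geq1\}$ by \eqref{a-1.5} (there $|z|^{\theta}\leq|z|$), dominated convergence shows this term is also $o\big((1+x^2)^{-\theta/2}\big)$. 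Since $(Q-\theta\,\diag(\alpha))u_\theta=\eta(\theta)u_\theta$, the leading terms combine into $\mathcal LV_2(x,i)=(1+x^2)^{-\theta/2}\big(\eta(\theta)u_\theta(i)+o(1)\big)$, so $\mathcal LV_2\leq\tfrac12\eta(\theta)V_2\leq0$ outside some compact $K$. As $V_2$ is bounded, positive and vanishes at infinity, $\{V_2<\inf_KV_2\}\setminus K\neq\emptyset$, and the standard transience criterion for irreducible Markov processes yields that $(X_t,\La_t)$ is transient.

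The main obstacles I anticipate are twofold. First, establishing the large-jump estimates with the \emph{correct order of decay in $x$}: this is exactly where \eqref{a-1} and \eqref{a-1.5} are consumed, and in $(ii)$ one genuinely needs the jump correction to be of strictly smaller order than $(1+x^2)^{-\theta/2}$ rather than merely $o(1)$, which forces the rescaling step above. Second, the structural verifications --- non-explosion, irreducibility, petiteness of compact sets, and the admissibility of $V_1,V_2$ in Dynkin's formula --- that convert the generator inequalities into genuine recurrence and transience statements; these rely on $a>0$ and the irreducibility of $Q$.
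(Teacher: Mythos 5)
Your proposal is correct in substance, but it reaches the conclusion by a genuinely different route from the paper. The paper keeps the Lyapunov functions purely in the $x$-variable ($h(x)=\log(1+x^2)$ with auxiliary $g(x)=2x^2/(1+x^2)$ for recurrence, $V(x)=(\delta+x^2)^{-1}$ for transience), derives state-dependent drift rates $L^{(i)}h\leq(\alpha_i+\veps)g$ and $L^{(i)}V\leq(-2\alpha_i+\veps)V$, and then invokes its Proposition \ref{t-3} (imported from Shao's earlier work and stated without proof) to convert the averaged condition $\sum_i\mu_i\beta_i<0$ into recurrence or transience. You instead internalize the averaging into the Lyapunov function itself: in (i) by adding the solution of the Poisson equation $Q\beta=2\bar\alpha\mathbf 1-2\alpha$ (solvable precisely because $\mu(2\bar\alpha\mathbf 1-2\alpha)=0$), and in (ii) by weighting with the Perron--Frobenius eigenvector of $Q-\theta\,\diag(\alpha)$, whose principal eigenvalue satisfies $\eta(0)=0$, $\eta'(0)=-\sum_i\mu_i\alpha_i<0$ --- the same spectral object the paper introduces as $Q_p$ in \eqref{eta}, but used here on the transience side. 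This makes your argument self-contained where the paper defers to an external criterion, at the cost of having to verify directly the structural hypotheses (non-explosion, irreducibility, petiteness of compacts, admissibility in Dynkin's formula) that Proposition \ref{t-3} packages away; you flag these but, like the paper, do not prove them. The jump estimates are handled the same way in both proofs (uniform-in-$x$ domination plus dominated convergence), and your bounds check out. A minor dividend of your choice $(1+x^2)^{-\theta/2}$ with small $\theta$ in (ii) is that the large-jump term only consumes $\int_{|z|\geq1}|z|^{\theta}\nu(\d z)<\infty$, so transience actually follows from a weaker moment condition than \eqref{a-1.5}; the paper's power $-2$ test function is what forces the $|z|^2$ integrability there.
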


Restricted to the set $\{|z|\geq 1\}$, our condition \eqref{a-1} is just the condition (1.3) in \cite{SY84}. In \cite{SY84}, Sato and Yamazato showed that this condition is a sharp condition on the integrability of L\'evy measure $\nu$ so that the associated Ornstein-Uhlenbeck process without regime-switching to be positive recurrent. Later  Shiga \cite{Shi} provided a recurrence criterion for this kind of process in one dimension and discussed several symmetric multidimensional cases.    \cite{SWY} and \cite{SWYY} also investigated recurrence criterion in multidimensional case, and finally Watanabe \cite{Wat} used the Fourier analytic method to provide a general recurrence criterion for L\'evy-driven Ornstein-Uhlenbeck processes, which solved Sato's conjecture in the affirmative.

Next, when $(X_t,\La_t)$ is positive recurrent, it owns a unique stationary distribution $\pi$. Concerning the tail behavior of $\pi$, we establish the following result:
\begin{mythm}\label{t-2}
Assume that \eqref{a-1} and $\sum_{i\in\S}\mu_i\alpha_i<0$ hold, and denote by $\pi$ the stationary distribution of $(X_t,\La_t)$.
\begin{itemize}
  \item[$(i)$] If there exists some $i_0\in\S$ such that for any $\lambda>0$,
  \begin{equation}\label{a-2}
  \int_{|z|\geq 1} \Big(\e^{\lambda \sigma_{i_0} z}-1\Big)\nu(\d z)=\infty,
  \end{equation}
  then $\pi$ is heavy-tailed, i.e.
  \begin{equation}\label{a-3}
   \sum_{j\in \S}\int_{\R}\e^{\lambda |x|} \pi(\d x, j)=\infty,\quad \forall\, \lambda>0.
  \end{equation}
  \item[$(ii)$] Suppose there exists a $\lambda_0>0$ such that for each $i\in\S$,
  \begin{equation}\label{a-4}
  \int_{|z|\geq 1} \Big(\e^{\lambda_0|\sigma_i z|}-1\Big)\nu(\d z)<\infty.
  \end{equation}
   If $\max_{i\in\S}\alpha_i<0$, then $\pi$ is light-tailed. If $\max_{i\in\S}\alpha_i>0$, then
   for any $p>\kappa \vee 2$,
   \[\sum_{j\in \S}\int_{\R}|x|^p\pi(\d x,j )=\infty,\]
   which means that $\pi$ is heavy-tailed, where $\kappa>0$ is given in \eqref{kap} below.
\end{itemize}
\end{mythm}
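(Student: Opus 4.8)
The plan is to prove the three assertions by rather different devices. Write $\A$ for the (extended) generator of $(X_t,\La_t)$: for suitably smooth $f(\cdot,i)$,
\begin{align*}
\A f(x,i)={}&\alpha_i x\,\partial_x f(x,i)+\sigma_i b\,\partial_x f(x,i)+\tfrac12\sigma_i^2 a\,\partial_{xx}f(x,i)+\sum_{j\in\S}q_{ij}\big(f(x,j)-f(x,i)\big)\\
&+\int_{z\neq0}\Big(f(x+\sigma_i z,i)-f(x,i)-\sigma_i z\,\partial_x f(x,i)\,\mathbf{1}_{\{|z|<1\}}\Big)\nu(\d z),
\end{align*}
and recall the variation-of-constants formula $X_t=\e^{\int_0^t\alpha_{\La_s}\d s}X_0+\int_0^t\e^{\int_s^t\alpha_{\La_u}\d u}\sigma_{\La_s}\,\d Z_s$. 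Under \eqref{a-1} and $\sum_i\mu_i\alpha_i<0$, Theorem~\ref{t-1}$(i)$ gives positive recurrence, so $\pi$ is well defined and we may run the process in stationarity, i.e.\ $(X_t,\La_t)\sim\pi$ for all $t\ge0$. For part $(i)$ I would use a ``single large jump'' argument. We may assume $\sigma_{i_0}>0$ (the case $\sigma_{i_0}<0$ is symmetric, exchanging the two tails of $\nu$; note \eqref{a-2} forces $\sigma_{i_0}\neq0$ and $\nu(\{z\ge1\})\in(0,\infty)$). Fix $\lambda>0$ and suppose, for contradiction, that $\Phi_\lambda:=\sum_j\int_\R\e^{\lambda|x|}\pi(\d x,j)<\infty$. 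Fix any $T>0$; let $E$ be the event $\{\La_0=i_0,\ \La_s=i_0\ \forall s\in[0,T]\}$, with $\p(E)=\mu_{i_0}\e^{q_{i_0i_0}T}>0$, and let $E_1\subset E$ additionally require that $N$ has exactly one atom on $[0,T]\times\{|z|\ge1\}$, still of positive probability. On $E_1$ that atom sits at a time $\tau$ (uniform on $[0,T]$) with mark $\zeta\sim\nu(\cdot\mid|z|\ge1)$, the two being independent of each other and of $(X_0,\La)$ and of the remainder $Z^\flat$ of the driving noise; variation of constants then gives $X_T=R+\sigma_{i_0}\e^{\alpha_{i_0}(T-\tau)}\zeta$, where $R:=\e^{\alpha_{i_0}T}X_0+\sigma_{i_0}\int_0^T\e^{\alpha_{i_0}(T-s)}\,\d Z^\flat_s$ is a.s.\ finite and independent of $\zeta$. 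All quantities being nonnegative, Tonelli gives $\E[\e^{\lambda X_T}\mathbf{1}_{E_1}]=\p(E_1)\,\E[\e^{\lambda R}\mid E_1]\,\E_\tau\E_\zeta\big[\e^{\lambda\sigma_{i_0}\e^{\alpha_{i_0}(T-\tau)}\zeta}\big]$, and for each fixed $\tau\in(0,T)$ the inner expectation is $\nu(\{|z|\ge1\})^{-1}\int_{|z|\ge1}\e^{\lambda'\sigma_{i_0}z}\nu(\d z)=+\infty$ by \eqref{a-2} applied with $\lambda':=\lambda\e^{\alpha_{i_0}(T-\tau)}>0$. Hence $\E[\e^{\lambda X_T}\mathbf{1}_{E_1}]=+\infty$, contradicting $\E[\e^{\lambda X_T}\mathbf{1}_{E_1}]\le\E[\e^{\lambda|X_T|}]=\Phi_\lambda<\infty$; this proves \eqref{a-3}.

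For part $(ii)$ when $\max_i\alpha_i<0$, I would use a Foster--Lyapunov estimate with $V(x)=\cosh(\lambda x)$ for a small $\lambda\in(0,\lambda_0)$, so that by \eqref{a-4} one has $\int_{|z|\ge1}(\cosh(\lambda\sigma_iz)-1)\nu(\d z)<\infty$ for every $i$. Since $V$ does not depend on $i$, the $Q$-part of $\A V$ vanishes, and using $\sinh(\lambda x)=\tanh(\lambda x)\cosh(\lambda x)$, $x\tanh(\lambda x)\ra+\infty$, the elementary bound $\cosh(\lambda(x+w))\le2\cosh(\lambda x)\cosh(\lambda w)$ and a Taylor estimate on $\{|z|<1\}$, one obtains $\A V(x,i)\le\big(\alpha_i\lambda\,x\tanh(\lambda x)+C_\lambda\big)\cosh(\lambda x)$ with $C_\lambda<\infty$ uniform over the finitely many $i$. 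As every $\alpha_i<0$, this is $\le-V(x,i)$ once $|x|\ge R$ and bounded for $|x|<R$, i.e.\ $\A V\le-V+d\,\mathbf{1}_{\{|x|\le R\}}$; the standard localisation-from-stationarity argument then yields $\int V\,\d\pi<\infty$, which is precisely the asserted light tail.

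For part $(ii)$ when $\max_i\alpha_i>0$, let $\eta(p)$ be the maximal eigenvalue of $Q+p\,\diag(\alpha_1,\dots,\alpha_N)$, with strictly positive right eigenvector $f^{(p)}$ (the off-diagonal entries being nonnegative). From $\eta(0)=0$, convexity of $\eta$, $\eta'(0)=\sum_i\mu_i\alpha_i<0$, and $\eta(p)\ge q_{ii}+p\alpha_i\ra+\infty$ for any $i$ with $\alpha_i>0$, there is a unique $\kappa>0$ with $\eta(\kappa)=0$ and $\eta(p)>0$ exactly when $p>\kappa$; this is the $\kappa$ of \eqref{kap}. Fix $p>\kappa\vee2$, so $\eta(p)>0$, and suppose $m:=\sum_j\int_\R|x|^p\pi(\d x,j)<\infty$, hence $\E|X_t|^p=m$ for all $t$ in stationarity. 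Writing $X_t=\e^{\int_0^t\alpha_{\La_s}\d s}Y_t$ with $Y_t:=X_0+\int_0^t\e^{-\int_0^u\alpha_{\La_r}\d r}\sigma_{\La_u}\,\d Z_u$, we get $m=\E\big[\e^{p\int_0^t\alpha_{\La_s}\d s}|Y_t|^p\big]$; conditioning on $(\La_s)$ and applying the Feynman--Kac ($h$-transform) identity for the potential $i\mapsto p\alpha_i$ — tilting the chain to $\wt\La$ with rates $\wt q_{ij}=q_{ij}f^{(p)}_j/f^{(p)}_i$ — gives $m=\e^{\eta(p)t}\,\wt{\E}\big[(f^{(p)}_{\La_0}/f^{(p)}_{\La_t})\,|Y_t|^p\big]\ge\e^{\eta(p)t}\,(\min_if^{(p)}_i/\max_if^{(p)}_i)\,\wt{\E}|Y_t|^p$. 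If $\inf_{t\ge0}\wt{\E}|Y_t|^p>0$, then $m\ge c\,\e^{\eta(p)t}\ra\infty$, a contradiction, so the $p$-th moment is infinite. I expect this last lower bound to be the main obstacle: conditionally on the path of $\wt\La$ one must rule out that the accumulated noise $\int_0^t\e^{-\int_0^u\alpha}\sigma\,\d Z$ systematically cancels $X_0$; this is where the non-degeneracy $a>0$ enters (a persistent Gaussian component in $Y_t$, of variance bounded below by the contribution of times near $0$) and, I suspect, where the restriction $p>2$ is used (a second-moment control of the noise, or treating $|x|^p$ as a $C^2$ function). An alternative route is to pass to the embedded affine recursion $X_{T_n}=A_nX_{T_{n-1}}+B_n$ along the jump times $T_n$ of $\La$, where $A_n=\e^{\alpha_{\La_{T_{n-1}}}(T_n-T_{n-1})}$ is heavy-tailed whenever $\alpha_{\La_{T_{n-1}}}>0$, and to invoke a Kesten--Goldie-type lower bound on the tail of its (Markov-modulated) stationary solution in the spirit of \cite{DY}; already the single-state specialisation gives infinite $p$-th moments for $p>\min_{i:\alpha_i>0}(-q_{ii})/\alpha_i$, and the role of the eigenvalue $\eta$ is to sharpen the threshold to $\kappa$.
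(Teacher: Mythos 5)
Your part $(i)$ is correct and takes a genuinely different route from the paper. The paper argues by contradiction: assuming $\sum_j\int\e^{2\delta_0|x|}\pi(\d x,j)<\infty$, it applies It\^o's formula to $\e^{\delta_0 X_t}$ in stationarity, so that the left-hand side vanishes while the large-jump term $\E\int_0^t\int_{|z|\ge1}\e^{\delta_0X_{s-}}\big(\e^{\delta_0\sigma_{\La_{s-}}z}-1\big)\nu(\d z)\d s$ is infinite by \eqref{a-2} and the irreducibility of $(\La_t)$. Your one-big-jump decomposition reaches the same conclusion directly from the variation-of-constants formula and the conditional structure of the Poisson random measure; it buys you a cleaner argument that avoids justifying It\^o's formula for an exponential test function whose integrability is precisely what is in question. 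Your part $(ii)$ in the case $\max_{i\in\S}\alpha_i<0$ is essentially the paper's argument in Lyapunov clothing: the paper works with $\e^{\delta|X_t|}$ for $\delta$ small and closes a Gronwall inequality, you work with $\cosh(\lambda x)$ and invoke the standard drift-condition-from-stationarity argument; both are fine (your version does not even need $\delta$ small, since $\alpha_i\lambda x\tanh(\lambda x)\to-\infty$ dominates the constant $C_\lambda$).

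The genuine gap is in the case $\max_{i\in\S}\alpha_i>0$, and it is exactly the step you flag yourself: the lower bound $\inf_{t\ge0}\wt{\E}\,|Y_t|^p>0$ for the tilted process. As written this is not proved, and it is not a routine technicality: one must show, uniformly in $t$ and essentially uniformly over paths of the tilted chain, that $Y_t=X_0+\int_0^t\e^{-\int_0^u\alpha_{\La_r}\d r}\sigma_{\La_u}\,\d Z_u$ does not concentrate near $0$; your Gaussian anticoncentration sketch can presumably be completed (conditioning on everything except the Brownian increment over a fixed initial window), but it must handle states with $\sigma_i=0$ and the interaction with the change of measure, and neither it nor the Kesten--Goldie alternative is carried out. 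The paper sidesteps this entirely by not decomposing $X_t$ multiplicatively: it differentiates $G_p^\gamma(t)=\E\big[|X_{t-}|^p\,\big|\,\F^\La\big]$ via It\^o's formula, absorbs all lower-order terms using $\theta G_r^\gamma\le\veps G_p^\gamma+c_0$ for $r<p$ (this is where $p>2$ enters, through the $C^2$-estimates on $|x|^p$), obtains the pathwise differential inequality $\frac{\d}{\d t}G_p^\gamma(t)\ge(p\alpha_{\La_{t-}}-\veps)G_p^\gamma(t)-M$, and integrates; the only non-degeneracy then required is $G_p^\gamma(0)\ge\min_{i\in\S}\int_\R|x|^p\pi(\d x,i)>0$, a statement about $\pi$ at a single time which follows from $a>0$. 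The spectral input, namely $\E\,\e^{p\int_0^t\alpha_{\La_s}\d s}\ge c_p\e^{-\eta_pt}$ with $\eta_p<0$ for $p>\kappa$ (i.e.\ \cite[Proposition 4.1]{BGM10}), is the same as your Perron--Frobenius tilting. I recommend either completing the anticoncentration estimate or switching to the paper's conditional differential-inequality formulation, which pushes the entire non-degeneracy burden back to time $0$.
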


For any $p>0$, define $Q_p=Q+p\mathrm{diag}(\alpha_1,\ldots,\alpha_N)$, where $Q=(q_{ij})$ is the transition rate matrix of $(\La_t)$, and $\mathrm{diag}(\alpha_1,\ldots,\alpha_N)$ is the diagnal matrix generated by the vector $(\alpha_1,\ldots,\alpha_N)$.
Let
\begin{equation}\label{eta}
\eta_p=-\max_{\gamma\in \mathrm{Spec}(Q_p)}\mathrm{Re}\,\gamma,
\end{equation}
where $\mathrm{Spec}(Q_p)$ stands for the spectrum of $Q_p$.
Define
\begin{equation}\label{kap}
\kappa=\sup\{p>0; \eta_p>0\}.
\end{equation}
According to \cite[Propositions 4.1, 4.2]{BGM10}, $\kappa=\infty$ if $\max_{i\in \S}\leq 0$; otherwise, $\kappa\in (0,\min\{q_i/\alpha_i;\alpha_i>0\})$. Moreover, for $p>\kappa$, $\eta_p<0$ and for $p\in (0,\kappa)$, $\eta_p>0$.

\begin{myrem}
  According to the characterization of the tail behavior for L\'evy-driven Ornstein-Uhlenbeck process in Lemma \ref{t-4} below, assertion (i) of Theorem \ref{t-2} tells us that if in some fixed environment, the L\'evy measure $\nu$ makes the stationary distribution for the Ornstein-Uhlenbeck process without switching to be heavy-tailed, then the stationary distribution of Ornstein-Uhlenbeck process with switching must be heavy-tailed regardless of the random switching of the environment.

  Assertion (ii) of Theorem \ref{t-2} tells us that under a little stronger condition on $\nu$ such that the Ornstein-Uhlenbeck process in every fixed environment must own light-tailed stationary distribution provided it exists, then the random switching $(\La_t)$ can change the tail behavior of the stationary distribution of $(X_t,\La_t)$ according to the signal of $\max_{i\in\S}\alpha_i$.
\end{myrem}

The proofs of  Theorem \ref{t-1} and Theorem \ref{t-2} are presented in the next section.

\section{Proofs of the main results}

In this section, we shall first present the argument of Theorem \ref{t-1} on the recurrent property of the process $(X_t,\La_t)$. To this end, we need to extend the Lyapunov-type criterion established by Shao in \cite{Sh15} for stochastic processes driven by the Brownian motion to the current situation.

Let us first introduce some notations.
For each $i\in \S$,  define
\begin{equation}\label{b-1}
\begin{split}
\D^{(i)}f(\cdot,i)(x)&= (\alpha_i x+b\sigma_i)\frac{\partial f(x,i)}{\partial x}+\frac{a\sigma_i^2}{2}\frac{\partial^2f(x,i)}{\partial x^2},\\
\J^{(i)} f(\cdot,i)(x)&=\int_{z\neq 0}\!\Big[f(x\!+\!\sigma_iz,i)\!-\!f(x,i)\!-\!\sigma_i z\frac{\partial f(x,i)}{\partial x}\mathbf{1}_{0<|z|<1}\Big]\nu(\d z),\\
  L^{(i)} f(\cdot,i)(x)&=\mathcal{D}^{(i)} f(\cdot,i)(x)+\Ji f(\cdot,i)(x),
\end{split}
\end{equation}
for $f(\cdot,i)\in C^2(\R)$. Then the infinitesimal generator of the process $(X_t,\La_t)$ is given by \begin{equation}\label{b-2}
\begin{split}
\A f(x,i)&=L^{(i)} f(\cdot,i)(x)+Qf(x,\cdot)(i)\\
&=L^{(i)}f(\cdot,i)(x)+\sum_{j\neq i}q_{ij}(f(x,j)-f(x,i)),\quad f(\cdot,i)\in C^2(\R).
\end{split}
\end{equation}

Following the same approach as \cite[Theorem 2.1, Theorem 3.1]{Sh15}, we can establish the following criterion to justify the recurrence of L\'evy-driven process $(X_t,\La_t)$. The details are omitted.
\begin{myprop}\label{t-3} Let $\mu$ be the stationary distribution of $(\La_t)$.

(i) Suppose that there exist constants $r_0>0$, $\beta_i\in \R$, $i\in\S$, positive function $V(x)$ such that
\begin{equation}\label{ly-1}
L^{(i)} V(x)\leq \beta_iV(x),\quad |x|\geq r_0, \quad i\in\S,
\end{equation}
satisfying $\sum_{i\in\S}\mu_i \beta_i<0$.
Then $(X_t,\La_t)$ is exponentially ergodic if $\lim_{|x|\to\infty} V(x)=\infty$, and is transient if $\lim_{|x|\ra \infty} V(x)=0$.

(ii) Suppose that there exist constants $r_0>0$,  $\beta_i\in \R$, $i\in \S$,  two positive functions $h,\,g \in C^2(\R)$    such that for each $i\in \S$,
\begin{equation}\label{b-6}
 L^{(i)}h(x)\leq \beta_i g(x),\quad \ |x|>r_0,
\end{equation}
and
\begin{equation}\label{b-7}
\lim_{|x|\to\infty} \frac{g(x)}{h(x)}=0,\quad \lim_{|x|\to \infty} \frac{L^{(i)} g(x)}{g(x)}=0.
\end{equation}
Assume that
\begin{equation}\label{b-8}
\sum_{i\in\S}\mu_i\beta_i<0.
\end{equation}
Then $(X_t,\La_t)$ given by \eqref{1.1} and \eqref{1.2} is positive recurrent if $\lim_{|x|\to \infty} h(x)=\infty$, and is transient if $\lim_{|x|\to \infty} h(x)=0$.
\end{myprop}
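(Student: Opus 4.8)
The plan is to follow the scheme of \cite[Theorems 2.1 and 3.1]{Sh15}: lift the scalar Lyapunov functions ($V$, or $h$ and $g$) to functions on the product space $\R\times\S$ by pairing them with a suitable strictly positive vector on $\S$ built out of $Q$ and $\mu$, establish a Foster--Lyapunov inequality for the full generator $\A$ of \eqref{b-2}, and then invoke the usual dichotomy, namely a drift condition for (exponential) ergodicity and a bounded nonnegative supermartingale for transience. Here $(X_t,\La_t)$ is automatically non-explosive because \eqref{1.1} is linear in $X$ and $(Z_t)$ is a.s.\ finite at all finite times, and the remaining ``soft'' inputs ($\psi$-irreducibility, aperiodicity, petiteness of compact sets, which hold since $a>0$ and $Q$ is irreducible) are precisely what is borrowed from \cite{Sh15}; so I would concentrate on the Lyapunov construction.

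The algebraic step uses $\sum_{i\in\S}\mu_i\beta_i<0$. For part (i), the principal eigenvalue of $Q+\veps\,\diag(\beta_1,\dots,\beta_N)$ is analytic in $\veps$ near $0$, equals $0$ at $\veps=0$ (with right eigenvector $\mathbf 1$ and left eigenvector $\mu$), and has derivative $\sum_i\mu_i\beta_i<0$ there; hence one can fix $\veps\in(0,1)$ so small that this matrix has principal eigenvalue $-\eta_\veps<0$, and, being irreducible with nonnegative off-diagonal entries, it admits by Perron--Frobenius a strictly positive right eigenvector $\xi=(\xi_i)_{i\in\S}$, i.e.\ $\sum_jq_{ij}\xi_j=-(\eta_\veps+\veps\beta_i)\xi_i$. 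For part (ii) the same inequality makes the Poisson equation $\sum_jq_{ij}\xi_j=\bar\beta-\beta_i$ (with $\bar\beta:=\sum_i\mu_i\beta_i<0$) solvable, its right-hand side being $\mu$-orthogonal; fix one such $\xi$.

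For part (i) I would put $\Psi(x,i):=\xi_iV(x)^{\veps}$ and exploit that $u\mapsto u^{\veps}$ is concave on $(0,\infty)$ for $\veps\in(0,1)$: for the local part $\Di(V^\veps)\le\veps V^{\veps-1}\Di V$, the surplus term $\tfrac{a\sigma_i^2}{2}\veps(\veps-1)V^{\veps-2}(V')^2$ being $\le0$; for the nonlocal part, applying the tangent-line inequality $v^\veps\le u^\veps+\veps u^{\veps-1}(v-u)$ pointwise inside the integral in \eqref{b-1} gives $\Ji(V^\veps)(x)\le\veps V(x)^{\veps-1}\Ji V(x)$. Combined with \eqref{ly-1} at $x$, this yields $L^{(i)}(V^\veps)(x)\le\veps\beta_iV(x)^\veps$ for $|x|\ge r_0$, and then the eigenvector relation gives
\[
\A\Psi(x,i)=\xi_i\big(L^{(i)}(V^\veps)(x)-\veps\beta_iV(x)^\veps\big)-\eta_\veps\Psi(x,i)\le-\eta_\veps\Psi(x,i),\qquad |x|\ge r_0.
\]
If $V\to\infty$ then $\Psi\to\infty$ and, adding the bounded contribution on the compact set $\{|x|\le r_0\}$, one gets $\A\Psi\le-\eta_\veps\Psi+c\,\mathbf 1_{\{|x|\le r_0\}}$, hence exponential ergodicity; if $V\to0$ then $\Psi$ is bounded, $\A\Psi\le0$ off a compact set, and stopping $\Psi(X_t,\La_t)$ at the first entrance of a large ball yields a bounded nonnegative supermartingale, hence transience.

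For part (ii) I would put $\Psi(x,i):=h(x)+\xi_ig(x)$, which is positive for $|x|$ large since $g/h\to0$. Then $\sum_jq_{ij}(\xi_j-\xi_i)=\sum_jq_{ij}\xi_j=\bar\beta-\beta_i$ gives
\[
\A\Psi(x,i)=L^{(i)}h(x)+\xi_iL^{(i)}g(x)+(\bar\beta-\beta_i)g(x),\qquad|x|>r_0,
\]
so by \eqref{b-6} and $g>0$ one has $\A\Psi(x,i)\le\big(\bar\beta+\xi_iL^{(i)}g(x)/g(x)\big)g(x)$, and since $\bar\beta<0$ while $L^{(i)}g/g\to0$ the bracket is $\le\tfrac{\bar\beta}{2}<0$ once $|x|$ is large. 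If $h\to\infty$ then $\Psi\to\infty$ and $\A\Psi\le\tfrac{\bar\beta}{2}g$ off a compact set gives positive recurrence by the standard criterion ($g$ being bounded away from $0$ at infinity in the situations where this proposition is applied); if $h\to0$ then $\Psi$ is a bounded nonnegative supermartingale off a compact set, hence $(X_t,\La_t)$ is transient. I expect the main obstacle to be this third step: transferring the scalar inequality $L^{(i)}V\le\beta_iV$ through both the nonlinear map $V\mapsto V^\veps$ and the nonlocal operator $\Ji$ --- concavity is indispensable here, and the $\veps$-renormalisation is forced on us because $\sum_i\mu_i\beta_i<0$ only makes the principal eigenvalue of $Q+\veps\,\diag(\beta)$ negative after rescaling. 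Verifying the irreducibility and petiteness conditions needed to quote \cite{Sh15} for the regime-switching jump process is the remaining, more routine, ingredient.
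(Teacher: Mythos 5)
Your proposal is correct and follows essentially the same route the paper intends: the paper omits the proof and defers to \cite{Sh15}, and your lifting of $V$ to $\xi_iV^{\veps}$ via the Perron--Frobenius eigenvector of $Q+\veps\,\diag(\beta_1,\dots,\beta_N)$, of $h$ to $h+\xi_i g$ via the Poisson equation for $Q$, and your concavity/tangent-line bound $\Ji(V^{\veps})\le \veps V^{\veps-1}\Ji V$ for the nonlocal part are exactly the required adaptations of \cite[Theorems 2.1 and 3.1]{Sh15}. Your caveat that the positive-recurrence conclusion in (ii) requires $g$ to stay bounded away from zero at infinity (so that $\A\Psi\le -c$ off a compact set) is genuine, but harmless here, since the proposition is applied only with $g(x)=2x^2/(1+x^2)\to 2$, consistent with the paper's own Remark 2.2.
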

\begin{myrem}
  In this proposition \ref{t-3}, we state that $(X_t,\La_t)$ is positive recurrent when $\sum_{i\in\S}\mu_i\beta_i<0$ and $\lim_{|x|\to\infty} h(x)=\infty$, which is a little stronger than the result presented in \cite[Theorem 3.1]{Sh15}. However, the argument in \cite{Sh15} is sufficient to this statement according to the Foster-Lyapunov criterion.
\end{myrem}
\noindent\textbf{Proof of Theorem \ref{t-1}}\\
First, let us prove the positive recurrence of $(X_t,\La_t)$ in the situation $\sum_{i\in\S}\mu_i\alpha_i<0$ by part (ii) of Proposition 2.1. Consider the following auxiliary functions
\[h(x)=\log(1+x^2),\quad g(x)=\frac{2x^2}{1+x^2}.\]
It is easy to check that $\dis\lim_{|x|\to \infty} h(x)=\infty$ and $\dis\lim_{|x|\to \infty} \nicefrac{g(x)}{h(x)}=0$.

Now we verify that $\lim_{|x|\to\infty} \frac{L^{(i)}g(x)}{g(x)}=0$.
Note that
\begin{equation}\label{c-1}
\frac{\Di g(x)}{g(x)}=\frac{2\alpha_i}{1+x^2}+\frac{2b\sigma_i}{x(1+x^2)}+\frac{a\sigma_i^2(1-3x^2)}{x^2(1+x^2)^2}
\longrightarrow 0,\quad \text{as}\ |x|\to \infty.
\end{equation}
For the jumping component, it holds
\begin{equation}\label{c-2}
\begin{split}
\frac{\Ji g(x)}{g(x)}&=\Big(1+\frac 1{x^2}\Big)\int_{z\neq 0}\!\frac{\sigma_i^2z^2+2\sigma_i xz}{(1+x^2)(1+(x+\sigma_iz)^2)}\nu(\d z)\\
&\qquad \qquad -\frac{2\sigma_i}{x(1+x^2)}\int_{z\neq 0}\!\big(1\wedge |z|\big)\nu(\d z).
\end{split}
\end{equation}
Due to  condition \eqref{a-1},
it is obvious that
\[\lim_{|x|\to \infty} \frac{2\sigma_i}{x(1+x^2)}\int_{z\neq 0}\!\big(1\wedge |z|\big)\nu(\d z)=0.
\]
Because
\[\Big|\frac{\sigma_i^2z^2+2\sigma_i xz}{(1+x^2)(1+(x+\sigma_iz)^2)}\Big|=\Big|\frac{(\sigma_iz+x)^2-x^2}{(1+x^2)(1+(x+\sigma_iz)^2)}\Big|\leq 1,\]
and
\[\Big|\frac{\sigma_i^2z^2+2\sigma_i xz}{(1+x^2)(1+(x+\sigma_iz)^2)}\Big|\leq \Big|\frac{\sigma_i^2 z^2+2\sigma_ixz}{(1+x^2)}\Big|\leq \sigma_i^2 z^2+|\sigma_iz|,\]
we obtain that
\[\Big|\frac{\sigma_i^2z^2+2\sigma_i xz}{(1+x^2)(1+(x+\sigma_iz)^2)}\Big|\leq \min\big\{1, \sigma_i^2z^2+|\sigma_i z|\big\}.
\]
Invoking condition \eqref{a-1}, the dominated convergence theorem yields that
\begin{equation}\label{c-3}
\lim_{|x|\to \infty} \Big(1+\frac{1}{x^2}\Big)\int_{z\neq 0}\!\frac{\sigma_i^2+2\sigma_i xz}{(1+x^2)(1+(x+\sigma_iz)^2)}\nu(\d z)=0.
\end{equation}
Hence,
\begin{equation}\label{c-4}
\lim_{|x|\to \infty} \frac{\Ji g(x)}{g(x)}=0.
\end{equation} Combining with \eqref{c-1}, this further implies
\[\lim_{|x|\to \infty} \frac{L^{(i)} g(x)}{g(x)}= 0.\]

Next, we shall show that for any $\veps$ satisfying $-\sum_{i\in\S}\mu_i\alpha_i>\veps>0$, there exists a constant $r_0>0$ such that for every $i\in\S$,
\begin{equation}\label{c-5}
L^{(i)} h(x)\leq (\alpha_i+\veps) g(x),\qquad \text{as}\ |x|\geq r_0.
\end{equation}
So, by taking $\beta_i=\alpha_i+\veps$ in \eqref{b-6}, Proposition \ref{t-3} yields that
$(X_t,\La_t)$ is positive recurrent.

Indeed, for the diffusion part of $L^{(i)}h$, we have
\begin{equation}\label{c-6}
\begin{split}
  \Di h(x)&=(\alpha_i x+b\sigma_i)\frac{2x}{1+x^2}+a\sigma_i^2\frac{1-x^2}{(1+x^2)^2}\\
  &\leq \Big(\alpha_i+\frac{b\sigma_i}x+\frac{a\sigma_i^2|1-x^2|}{2x^2(1+x^2)}\Big)g(x).
  \end{split}
\end{equation} Due the finiteness of the number of states in $\S$, it is clear that there exists $r_1>0$ such that
\begin{equation}\label{c-7}\frac{b\sigma_i}{x}+\frac{a\sigma_i^2|1-x^2|}{2x^2(1+x^2)}\leq \frac{\veps}{4}, \quad \forall\,|x|\geq r_1,\ i\in \S.
\end{equation}

For the jump part of $L^{(i)} h$, direct calculation leads to
\begin{align*}
  \Ji h(x)&=\int_{z\neq 0}\!\big[\log(1+(x+\sigma_i z)^2)-\log(1+x^2)\big]\nu(\d z)-\frac{2\sigma_ix}{1+x^2}\int_{0<|z|<1}\!z\,\nu(\d z).
\end{align*}
Since
\begin{align*}
  \log(1+(x+\sigma_iz)^2)-\log(1+x^2)&\leq \log\Big(1+\frac{\sigma_i^2z^2}{1+x^2}+\frac{2x}{1+x^2}|\sigma_iz|\Big)\\
  &\leq \log(1+|\sigma_i z|+\sigma_i^2z^2)\leq 2\log(1+|\sigma_iz|),
\end{align*}
we obtain from condition \eqref{a-1} and the dominated convergence theorem that
\[\lim_{|x|\to \infty}  \Ji h(x) =0.\]
Noting that $g(x)\geq 1$ for $|x|\geq 1$, there exists $r_2>1$ such that for any $|x|\geq r_2$,
\begin{equation}\label{c-8}
\Ji h(x)\leq \frac{\veps}{4} g(x).
\end{equation}
Consequently, \eqref{c-5} follows from \eqref{c-6}, \eqref{c-7} and \eqref{c-8} by taking $  r_0=\max\{r_1,r_2\}$.

Second, we go to prove the transience of $(X_t,\La_t)$ in the case  $\sum_{i\in\S}\mu_i\alpha_i>0$.
To this aim, we consider the function
\[V(x)=\frac{1}{\delta+x^2}\]
for some fixed $0 < \delta < 1$, and prove the desired result according to part (i) of Proposition \ref{t-3}. It holds
\begin{equation}\label{d-1}
\Di V(x)=\Big(-2\alpha_i\frac{x^2}{\delta+x^2}
-2b\sigma_i\frac{x}{\delta+x^2}+a\sigma_i^2\frac{3x^2-\delta}{(\delta+x^2)^2}\Big)V(x),
\end{equation}
and clearly
\[\lim_{|x|\to \infty} -2\alpha_i\frac{x^2}{\delta+x^2}
-2b\sigma_i\frac{x}{\delta+x^2}+a\sigma_i^2\frac{3x^2-\delta}{(\delta+x^2)^2}=-2\alpha_i.
\]
For the jump part of $L^{(i)}V$, it holds
\begin{equation}\label{d-2}
\Ji V(x)=\Big( \int_{z\neq 0}\!\!\frac{x^2-(x+\sigma_i z)^2}{\delta+(x+\sigma_iz)^2}\nu(\d z)+\frac{2\sigma_i x}{\delta+x^2}\int_{0<|z|<1}\!\! z\nu(\d z)\Big) V(x).
\end{equation}
Moreover,
\begin{align*}
\int_{z\neq 0}\frac{x^2-(x+\sigma_iz)^2}{\delta+(x+\sigma_iz)^2}\nu(\d z)&=\int_{|z|\geq 1}\!\frac{x^2-(x+\sigma_iz)^2}{\delta+(x+\sigma_iz)^2}\nu(\d z)+\int_{0<|z|<1}\!\frac{x^2-(x+\sigma_iz)^2}{\delta+(x+\sigma_iz)^2}\nu(\d z)\\
&=:I_1+I_2.
\end{align*}
To estimate the term $I_1$, since $0 < \delta < 1$, we have
\begin{align*}
     \Big|\frac{x^2-(x+\sigma_iz)^2}{\delta+(x+\sigma_iz)^2}\Big|
  &\leq   \frac{2|x\sigma_i z|}{\delta+(x+\sigma_iz)^2}+\frac{\sigma_i^2 z^2}{\delta} \\
  &\leq  \frac{(x+\sigma_iz)^2}{\delta+(x+\sigma_iz)^2}+\frac{\sigma_i^2 z^2}{\delta}\\
   &\leq   \frac{1}{\delta}+\frac{\sigma_i^2}{\delta}z^2,
\end{align*}
and $\frac{1}{\delta}+\frac{\sigma_i^2z^2}{\delta}$ is an integrable function with respect to $\mathbf{1}_{|z|\geq 1}\nu(\d z)$  by condition \eqref{a-1.5}. Then the dominated convergence theorem leads to
\begin{equation}\label{d-3}
\lim_{|x|\to \infty} \int_{|z|\geq 1} \frac{x^2-(x+\sigma_iz)^2}{\delta+(x+\sigma_iz)^2}\nu(\d z)  =0.
\end{equation}

To estimate the term $I_2$, let us consider
\[u(x):=\frac{2x\sigma_iz}{\delta+(x+\sigma_iz)^2} \quad \text{for $0<|z|<1$},\]
and direct calculation leads to
\[u'(x)=\frac{2\sigma_iz \delta-2\sigma_izx^2+2(\sigma_i z)^3}{(\delta+(x+\sigma_iz)^2)^2}.\]
Hence, there exists $r_1>0$ such that for $|x|\geq r_1$, $u'(x)>0$ if $\sigma_iz<0$ and $u'(x)<0$ if $\sigma_iz>0$, which means that $|u(x)|$ is monotone when $|x|\geq r_1$.
Noting that
$\lim_{|x|\to\infty} |u(x)|=0$, we have
\begin{equation}\label{d-4}
\sup_{|x|\geq r_1} |u(x)|=\sup_{|x|=r_1} |u(x)|=\max\Big\{\frac{2r_1 \sigma_i z}{\delta+(r_1+\sigma_iz)^2}, \frac{-2r_1\sigma_iz}{\delta+(\sigma_iz-r_1)^2}\Big\},
\end{equation}
Since
\[ \Big|\frac{x^2-(x+\sigma_iz)^2}{\delta+(x+\sigma_iz)^2}\Big|\leq |u(x)|+\frac{\sigma_i^2 z^2}{\delta},\]
it follows from \eqref{d-4}, condition \eqref{a-1.5} and the dominated convergence theorem that
\begin{equation}\label{d-5}
\lim_{|x|\to \infty} \int_{0<|z|<1}\!\frac{x^2-(x+\sigma_iz)^2}{\delta+(x+\sigma_iz)^2}\nu(\d z)=0.
\end{equation}

Invoking \eqref{d-1}, \eqref{d-3}, \eqref{d-5} and condition \eqref{a-1}, by \eqref{d-2}, for any $\veps$ in $(0,\sum_{i\in\S} \mu_i\alpha_i)$, there exists $r_0>r_1>0$ such that
\begin{equation}\label{d-6}
 L^{(i)} V(x)=\Di V(x)+\Ji V(x)\leq \big(-2\alpha_i+ \veps\big)V(x),\qquad |x|\geq r_0.
\end{equation}
According to Proposition \ref{t-3}, $(X_t,\La_t)$ is transient.
The proof is completed. \fin

Next, we proceed to investigate the tail property of the stationary distribution $\pi(\d x, i)$ of $(X_t,\La_t)$.
Before this, we present a lemma on the tail property of the stationary distribution of  L\'evy-driven Ornstein-Uhlenbeck process without regime-switching, which tells us how the jumping component impacts the tail behavior of its stationary distribution.

Consider the process
\begin{equation}\label{e-1}
\d Y_t=\alpha Y_t\d t+\sigma \d Z_t, \quad Y_0=y_0\in \R,
\end{equation}
where $\alpha<0$, $\sigma\in \R$ and $(Z_t)_{t\geq 0}$ is still given by \eqref{1.2} satisfying condition \eqref{a-1}. Then the process $(Y_t)$ is positive recurrent (cf. Theorem \ref{t-1} or \cite[Theorem 4.1]{SY84}), and its stationary distribution is denoted by $\tilde \pi$.

\begin{mylem}\label{t-4}
Assume condition \eqref{a-1} holds. Consider the process $(Y_t)$ given by \eqref{e-1} and its stationary distribution $\tilde \pi$.
\begin{itemize}
  \item[$(i)$] If for some $\lambda_0>0$ such that
  \begin{equation}\label{e-2}
  \int_{|z|\geq 1} \Big(\e^{\lambda_0 \sigma z}-1\Big)\nu(\d z)<\infty,
  \end{equation}
  then $\int_{\R} \e^{\lambda_0 |x|}\tilde \pi(\d x)<\infty$, which means that $\tilde \pi$ is light-tailed.
  \item[$(ii)$] If for any $\lambda>0$,
  \begin{equation}\label{e-3}
  \int_{|z|\geq 1}\Big(\e^{\lambda\sigma z}-1\Big)\nu(\d z)=\infty,
  \end{equation} then for any $\lambda>0$, $\int_{\R} \e^{\lambda x}\tilde \pi(\d x)=\infty$, which means that $\tilde \pi$ is heavy-tailed.
\end{itemize}
\end{mylem}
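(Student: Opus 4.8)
The plan is to use the explicit representation of the stationary law $\tilde\pi$ as the distribution of an exponential functional of $Z$, and then read off both tail properties from the L\'evy exponent. Solving \eqref{e-1} gives $Y_t=\e^{\alpha t}y_0+\sigma\int_0^t\e^{\alpha(t-s)}\,\d Z_s$, and since $(Z_t)$ has stationary independent increments $\int_0^t\e^{\alpha(t-s)}\,\d Z_s$ has the same law as $\int_0^t\e^{\alpha s}\,\d Z_s$; under \eqref{a-1} the improper integral $\int_0^\infty\e^{\alpha s}\,\d Z_s$ converges almost surely (this is precisely the integrability entering \cite[Theorem~4.1]{SY84}). Hence $Y_t$ converges in law, as $t\to\infty$, to $\xi:=\sigma\int_0^\infty\e^{\alpha s}\,\d Z_s$, so $\tilde\pi=\mathrm{Law}(\xi)$ and it suffices to decide when the exponential moments $\E\bigl[\e^{\theta\xi}\bigr]=\int_\R\e^{\theta x}\,\tilde\pi(\d x)$ are finite.

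For part~(i) I would compute this Laplace transform through the L\'evy exponent $\psi(\theta)=b\theta+\tfrac a2\theta^2+\int_{z\neq0}(\e^{\theta z}-1-\theta z\mathbf{1}_{0<|z|<1})\,\nu(\d z)\in(-\infty,+\infty]$ of $(Z_t)$. Approximating $\int_0^T\e^{\alpha s}\,\d Z_s$ by Riemann--Stieltjes sums and using independence of increments, one obtains $\E[\exp(\theta\sigma\int_0^T\e^{\alpha s}\,\d Z_s)]=\exp(\int_0^T\psi(\sigma\theta\e^{\alpha s})\,\d s)$; letting $T\to\infty$ and substituting $v=\sigma\theta\e^{\alpha s}$ gives
\[
\E\bigl[\e^{\theta\xi}\bigr]=\exp\Bigl(\frac1{|\alpha|}\int_0^{\sigma\theta}\frac{\psi(v)}{v}\,\d v\Bigr).
\]
Under \eqref{e-2} the exponent $\psi$ is finite on the interval joining $0$ to $\sigma\lambda_0$ (its small-jump part being controlled by \eqref{1.3}), while near $v=0$ the integral $\int_0\psi(v)/v\,\d v$ converges precisely because of the logarithmic moment condition \eqref{a-1}; hence $\E[\e^{\lambda_0\xi}]<\infty$, and running the same computation with $\theta=-\lambda_0$ (again using \eqref{e-2}) gives $\int_\R\e^{\lambda_0|x|}\,\tilde\pi(\d x)\le\E[\e^{\lambda_0\xi}]+\E[\e^{-\lambda_0\xi}]<\infty$. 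A second route, closer to the Lyapunov method used earlier, is to take $V(x)=\e^{\lambda_0 x}+\e^{-\lambda_0 x}$ and check from the form \eqref{b-1} of the generator $L$ of $(Y_t)$ that $LV(x)\le -cV(x)$ for $|x|\ge r_0$ and some $c>0$ --- the drift $\alpha\lambda_0 x$ dominates since $\alpha<0$, and the jump term is finite by \eqref{e-2} --- and then to conclude $\int_\R V\,\d\tilde\pi<\infty$ by the standard Foster--Lyapunov moment estimate.

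For part~(ii) the formula above only says formally that $\E[\e^{\lambda\xi}]=\exp(+\infty)$, so instead I would give a direct big-jump lower bound. Assume $\sigma>0$ (the case $\sigma<0$ is symmetric via the large negative jumps). Fix $K\ge1$ and write $Z=Z^{(1)}+Z^{(2)}$ as independent parts, $Z^{(1)}$ being the nondecreasing compound Poisson process of jumps of size $>K$ (with finite rate $c:=\nu((K,\infty))>0$, positivity forced by \eqref{e-3}) and $Z^{(2)}$ the remainder, and set $\xi=\xi_1+\xi_2$ correspondingly. Since $Z^{(1)}$ is nondecreasing and $\sigma>0$, $\xi_1\ge\sigma\e^{\alpha T_1}J_1$ where $(T_1,J_1)$ is the first jump of $Z^{(1)}$, so by independence
\[
\E[\e^{\lambda\xi}]\ \ge\ \E[\e^{\lambda\xi_2}]\,\E[\e^{\lambda\sigma\e^{\alpha T_1}J_1}]\ =\ \E[\e^{\lambda\xi_2}]\int_0^\infty\e^{-ct}\Bigl(\int_{z>K}\e^{\lambda\sigma\e^{\alpha t}z}\,\nu(\d z)\Bigr)\d t.
\]
For $t$ small enough that $\e^{\alpha t}\ge\tfrac12$ one has $\int_{z>K}\e^{\lambda\sigma\e^{\alpha t}z}\,\nu(\d z)\ge\int_{z>K}\e^{(\lambda\sigma/2)z}\,\nu(\d z)=+\infty$ by \eqref{e-3} (used with exponent $\lambda/2$), so the right-hand side is $+\infty$; as $\E[\e^{\lambda\xi_2}]>0$, this yields $\int_\R\e^{\lambda x}\,\tilde\pi(\d x)=+\infty$ for every $\lambda>0$.

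The main obstacle is the first step: rigorously establishing $\tilde\pi=\mathrm{Law}(\xi)$ --- the almost-sure convergence of the improper stochastic integral under \eqref{a-1} and the identification of its law with the invariant distribution (available from \cite{SY84}, but to be stated with care). If one keeps the Laplace computation for part~(i), one must also justify the limit exchanges in the Riemann--Stieltjes step (stochastic Fubini plus continuity of $\psi$ on the relevant range) and the passage $T\to\infty$; the big-jump argument for part~(ii) is the robust alternative precisely because it sidesteps these and needs only the representation together with elementary estimates on $\nu$.
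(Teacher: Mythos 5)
Your proposal is correct in substance and, for part (i), follows essentially the same route as the paper: the paper likewise represents $\tilde\pi$ through the identity $\widehat{\tilde\pi}(z)=\exp\big(\int_0^\infty-\Phi(\e^{\alpha t}\sigma z)\,\d t\big)$, evaluates it at a purely imaginary argument to get the exponential moment, and performs exactly your substitution $u=\e^{\alpha t}$ to reduce matters to $\int_0^1 u^{-1}\big(\e^{\lambda_0\sigma z u}-1\big)\,\d u$, with the log-moment condition \eqref{a-1} controlling the small-$u$ (equivalently small-$v$) behaviour, just as you indicate. Where you genuinely diverge is part (ii): the paper evaluates the same formula at the argument corresponding to $\e^{\lambda x}$ and concludes from ``$I_2=\infty$'' that the moment is infinite, which as you rightly sense is only formal --- it is really an implicit appeal to the standard equivalence between finiteness of exponential moments of an infinitely divisible (here self-decomposable) law and exponential integrability of its L\'evy measure. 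Your big-jump decomposition ($Z=Z^{(1)}+Z^{(2)}$, lower-bounding $\xi_1$ by the contribution of the first large jump and using independence) proves the divergence directly from the representation $\tilde\pi=\mathrm{Law}\big(\sigma\int_0^\infty \e^{\alpha s}\,\d Z_s\big)$ and elementary estimates on $\nu$; it is more self-contained than the paper's step and buys rigor at the cost of having to justify the exponential-functional representation, which is indeed available from the cited work of Sato and Yamazato.

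One caveat, which you inherit from the statement and which is equally present in the paper's own ``similarly'' step: condition \eqref{e-2} controls only $\int_{\{|z|\ge 1,\,\sigma z>0\}}\e^{\lambda_0\sigma z}\,\nu(\d z)$, since the contribution of $\{\sigma z<0\}$ to \eqref{e-2} is automatically bounded by $\nu(|z|\ge 1)<\infty$. Hence ``running the same computation with $\theta=-\lambda_0$'' requires $\int_{|z|\ge1}\big(\e^{-\lambda_0\sigma z}-1\big)\,\nu(\d z)<\infty$, which does not follow from \eqref{e-2}. As written, both your argument and the paper's establish only the one-sided bound $\int_{\R}\e^{\lambda_0 x}\,\tilde\pi(\d x)<\infty$; the two-sided conclusion $\int_{\R}\e^{\lambda_0|x|}\,\tilde\pi(\d x)<\infty$ needs the symmetrized hypothesis $\int_{|z|\ge1}\big(\e^{\lambda_0|\sigma z|}-1\big)\,\nu(\d z)<\infty$, which is exactly what \eqref{a-4} imposes where the lemma is used in Theorem \ref{t-2}. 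Your alternative Foster--Lyapunov route with $V(x)=\e^{\lambda_0 x}+\e^{-\lambda_0 x}$ runs into the same point, since finiteness of $\int_{|z|\ge1}\big(V(x+\sigma z)-V(x)\big)\,\nu(\d z)$ again requires exponential integrability of $\nu$ on both sides.
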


\begin{proof}
(i)\ The characteristic function of the L\'evy process $(Z_t)$ given by \eqref{1.2} is
\[\E\Big(\e^{\ii u Z_t}\Big)=\e^{-t\Phi(u)},\quad u\in \R, \ t\geq 0,\]
 where $\ii$ is the imaginary unit, and $\Phi$ is given by the L\'evy-Khintchine representation
   \begin{equation}\label{e-4}
   \Phi(u)=\frac12 au^2-\ii bu+\int_{z\neq 0}\!\big(1-\e^{\ii uz}+\ii uz\mathbf{1}_{0<|z|<1}\big)\nu(\d z).
   \end{equation}
 The stationary distribution $\tilde \pi$ of $(Y_t)$ has the following characteristic function
   \begin{equation}\label{e-5}
   \widehat{\tilde\pi}(z)=\exp\Big(\int_0^\infty -\Phi\big(\e^{\alpha t}\sigma z\big)\d t\Big).
   \end{equation}
   This yields
   \begin{equation}\label{e-6}
   \int_{\R} \e^{\lambda_0 x}\tilde \pi(\d x)=\widehat{\tilde \pi}(-\ii \lambda_0)=\exp\Big(\int_0^\infty -\Phi\big(-\ii \lambda_0 \sigma \e^{\alpha t}\big)\d t\Big).
   \end{equation}
Due to \eqref{e-4},
   \begin{equation}\label{e-7}
   \begin{split}
   \int_0^\infty \!\!\!-\Phi\big(-\ii \lambda_0 \sigma \e^{\alpha t}\big)\d t&=\int_0^\infty \Big(\frac 12a \lambda_0^2\sigma^2\e^{2\alpha t}\!+\!b\lambda_0 \sigma \e^{\alpha t}\Big)\d t\\
   &\quad+\int_0^\infty\!\!\int_{z\neq 0}\!\big(\exp\big(z\lambda_0 \sigma \e^{\alpha t}\big)\!-\!1\!-\!z\lambda_0 \sigma \e^{\alpha t}\mathbf{1}_{0<|z|<1}\big)\nu(\d z)\d t\\
   =:I_1+I_2.
   \end{split}
   \end{equation}
As $\alpha<0$, it is clear that
\[I_1=-\frac{a\lambda_0^2\sigma^2}{4\alpha}-\frac{b\lambda_0 \sigma}{\alpha}<\infty.\]
Performing the variable substitution $u=\e^{\alpha t}$, we get
\begin{align*}I_2&=-\frac{1}{\alpha}\int_0^1\int_{z\neq 0}\!\frac{1}{u}\Big(\e^{z\lambda_0 \sigma u}-1\Big)\nu(\d z)\d u+\frac{\lambda_0 \sigma}{\alpha}\Big(\int_{z\neq 0}\!z\mathbf1_{0<|z|<1}\nu(\d z)\Big)\\
&\leq -\frac{1}{\alpha}\int_{z\neq 0}\Big(\e^{\lambda_0\sigma z}-1\Big)\nu(\d z)+\frac{\lambda_0 \sigma}{\alpha}\Big(\int_{z\neq 0}\!z\mathbf1_{0<|z|<1}\nu(\d z)\Big),
\end{align*}
where we  used the fact the function $f (x):=(\e^{c x}-1)/x $ is increasing for any $c\neq 0$.
Combining these estimates on $I_1$, $I_2$ with \eqref{e-6}, \eqref{e-7}, by \eqref{a-1} and \eqref{e-2}, we obtain finally that
\[\int_{\R} \e^{\lambda_0 x}\tilde \pi(\d x)<\infty.
\]
Similarly, we can show
$\int_{\R}\e^{-\lambda_0 x}\tilde \pi(\d x)<\infty$.
Noting that $\e^{\lambda_0|x|}\leq \e^{\lambda_0 x}+\e^{-\lambda_0 x}$, we obtain that
\[\int_{\R}\e^{\lambda_0 |x|}\tilde \pi(\d x)<\infty.\]

(ii)\ Replacing $\lambda_0$ with any positive constant $\lambda$ in \eqref{e-6} and \eqref{e-7}, we still have that
\[I_1=-\frac{a\lambda^2\sigma^2}{4\alpha}-\frac{b\lambda \sigma}{\alpha}<\infty\]
since $\alpha<0$. However, in current case,
\begin{equation}\label{e-8}
I_2=-\frac{1}{\alpha}\int_0^1\!\!\int_{z\neq 0}\frac1{u}\Big(\e^{z\lambda\sigma u}-1\Big)\nu(\d z)\d u+\frac{\lambda\sigma}{\alpha}\Big(\int_{z\neq 0}\!\!z\mathbf{1}_{0<|z|<1}\nu(\d z)\Big).
\end{equation}
By conditions \eqref{a-1} and \eqref{e-3}, there exists some $C>0$ such that
\[\int_{0<|z|<1} \frac1u\Big(\e^{z\lambda \sigma u}-1\Big)\nu(\d z)\leq C\int_{0<|z|<1} |z|\nu(\d z)<\infty,\]
and
\[\int_{|z|\geq 1}\! \frac1u\Big(\e^{z\lambda \sigma u}-1\Big)\nu(\d z)=\infty,\quad u\neq 0.\]
As a consequence,
\[\int_{z\neq 0}\frac1{u}\Big(\e^{z\lambda\sigma u}-1\Big)\nu(\d z)=\infty,\]
which implies $I_2=\infty$ and
\[\int_{\R}\!\e^{\lambda x}\tilde \pi(\d x)=\infty.\]
The arbitrariness of the positive constant $\lambda$ implies that $\tilde \pi$ is heavy-tailed.
The proof of this lemma is complete.
\end{proof}

\noindent\textbf{Proof of Theorem \ref{t-2}}\\
(i)\ We prove this assertion by contradiction. Assume that there exists a $\delta_0>0$ such that
\begin{equation}\label{f-1}
\sum_{j\in\S}\int_{\R}\e^{2\delta_0|x|}\pi(\d x, j)<\infty.
\end{equation}
Applying It\^o's formula, we obtain that
\begin{equation}\label{f-2}
\begin{split}
  &\E\e^{\delta_0 X_t}-\E\e^{\delta_0 X_0}\\
  &=\E\int_0^t\delta_0\e^{\delta_0 X_{s-}} \big(\alpha_{\La_{s-}}X_{s-}+b\sigma_{\La_{s-}}+\frac{a\delta_0}{2}\sigma_{\La_{s-}}^2\big)\d s\\
  &\quad+2\E\int_0^t\int_{|z|\geq 1} \e^{\delta_0 X_{s-}}\big(\e^{\delta_0\sigma_{\La_{s-}} z}-1\big)\nu(\d z)\d s\\
  &\quad+\E\int_0^t\int_{0<|z|<1}\!\!\e^{\delta_0 X_{s-}}\big(\e^{\delta_0\sigma_{\La_{s-}}z}-1-\delta_0\sigma_{\La_{s-}}z \big)\nu(\d z)\d s\\
  &=:I_1+I_2+I_3
\end{split}
\end{equation}
If we take the initial distribution of $(X_0,\La_0)$ to be the stationary distribution $\pi$, then for every $t>0$, the distribution of $(X_t,\La_t)$ remains to be $\pi$, which implies immediately that the left-hand side of \eqref{f-2} equals to $0$.
From $\sum_{j\in \S} \int_{\R}\e^{2\delta_0 |x|}\pi(\d x,j)<\infty$ and the finiteness of the number of states in $\S$, it is easy to see that
\[\E\big[ \e^{\delta_0 X_{s-}}\big( |\alpha_{\La_{s-}}X_{s-}|
+|b\sigma_{\La_{s-}}|+\frac{a\delta_0}{2}\sigma_{\La_{s-}}^2\big)\big]<\infty,\]
and hence the term $I_1<\infty$.
Condition \eqref{a-1} ensures that there exists a constant $C>0$ such that for every $j\in \S$,
\[\int_{0<|z|<1} \big|\e^{\delta_0\sigma_j z}-1-\delta_0 \sigma_jz\big|\nu(\d z)\leq C\int_{0<|z|<1} |z|\nu(\d z)<\infty.\]
Then using again \eqref{f-1}, we have $I_3<\infty$.
Nevertheless, the term $I_2$ is equal to $\infty$. Indeed, when $\La_{s-}=i_0$, it follows from condition \eqref{a-2} that
\[ \int_{|z|\geq 1}\big(\e^{\delta_0\sigma_{\La_{s-}}z}-1\big)\nu(\d z)=\infty.\]
As an irreducible Markov chain, $(\La_t)$ arrives at the state $i_0$ with a positive probability, so $I_2=\infty$. Therefore, we get a contradiction that the left-hand side of \eqref{f-2} equals to 0, but the right-hand side of \eqref{f-2} equals to $\infty$. Consequently, $\pi$ cannot be light-tailed in current situation.

(ii) First, we go to show $\pi$ is light-tailed under conditions \eqref{a-4} and $\max_{i\in \S} \alpha_i<0$. For $\delta\in (0,\lambda_0)$, similar to \eqref{f-2}, It\^o's formula yields that
\begin{equation}\label{f-3}
\begin{split}
  &\E \e^{\delta |X_t|}-\E\e^{\delta |X_0|}\\
  &=\E\int_0^t\Big[\delta\e^{\delta  |X_{s-}|}\sgn(X_{s-})\big(\alpha_{\La_{s-}}X_{s-}+b\sigma_{\La_{s-}})+
   \frac{a\delta^2}{2}\e^{\delta |X_{s-}|}\sigma_{\La_{s-}}^2\Big]\d s\\
  &\quad+2\E\int_0^t\int_{|z|\geq 1} \big(\e^{\delta  |X_{s-}+\sigma_{\La_{s-}} z|}-\e^{\delta  |X_{s-}|}\big)\nu(\d z)\d s\\
  &\quad+\E\int_0^t\int_{0<|z|<1}\!\!\big(\e^{\delta| X_{s-}\!+\!\sigma_{\La_{s-}}z|}\!-\!\e^{\delta |X_{s-}|}
  \!-\!\delta \e^{\delta |X_{s-}|}\sgn(X_{s-})\sigma_{\La_{s-}}z\big)\nu(\d z)\d s\\
  &=:J_1+J_2+J_3,
\end{split}
\end{equation}
where $\sgn(x)$ denotes the sign function of $x$.

Since $\max_{i\in \S}\alpha_i<0$, then for any $c>0$ there exists $M>0$ such that
\begin{equation}\label{ff}
\alpha_{\La_{s-}}|X_{s-}|+\sgn(X_{s-})b\sigma_{\La_{s-}}\leq -c+M \e^{-\delta |X_{s-}|}.
\end{equation}
Therefore,
\begin{equation}\label{f-4}
J_1\leq \int_0^t \E\big(  \delta M -c\delta \e^{\delta |X_{s-}|} \big) \d s
\end{equation}

For $J_2$, we have
\begin{equation}\label{f-5}
\begin{split}
J_2&\leq \E\int_0^t\e^{\delta |X_{s-}|}\Big[\int_{|z|\geq 1}\big(\e^{\delta|\sigma_{\La_{s-}}z|}-1\big)\nu(\d z)\Big]\d s\\
&\leq K_1(\delta)\int_0^t \E\big[\e^{\delta |X_{s-}|}\big]\d s,
\end{split}
\end{equation}
where
\[K_1(\delta)=\max_{i\in\S} \int_{|z|\geq 1} \big(\e^{\delta|\sigma_i z|}-1\big)\nu(\d z)<\infty, \]
due to \eqref{a-4} and $\delta<\lambda_0$. Moreover,
\[\lim_{\delta\to 0}\frac{K_1(\delta)}{\delta}\leq \lim_{\delta\to 0}\int_{|z|\geq 1}\!\!\frac{1}{\delta}\big(\e^{\delta\sigma_\ast |z|}-1\big)\nu(\d z)
=\int_{|z|\geq 1}\!\!\sigma_\ast |z|\nu(\d z)<\infty,\]
where $\sigma_\ast :=\max_{i\in \S} |\sigma_i|$.

For $J_3$, it holds
\begin{equation}\label{f-6}
\begin{split}
  J_3&\leq \E\int_0^t\!\int_{0<|z|<1}\!\!\e^{\delta |X_{s-}|}\Big(\e^{\delta|\sigma_{\La_{s-}}z|}-1+\delta|\sigma_{\La_{s-}}z|\Big)\nu(\d z)\d s\\
  &\leq K_2(\delta)\int_0^t\E\e^{\delta|X_{s-}|}\d s,
\end{split}
\end{equation}
where
\[K_2(\delta)=\max_{i\in\S}\int_{0<|z|<1}\!\!\Big(\e^{\delta |\sigma_i z|}-1+\delta|\sigma_iz|\Big)\nu(\d z)<\infty, \]
due to \eqref{a-1}. Furthermore,
\[\lim_{\delta\to 0}\frac{K_2(\delta)}{\delta}\leq \int_{0<|z|<1}\!\! 2\sigma_\ast |z|\nu(\d z)<\infty.\]

Inserting \eqref{f-4}, \eqref{f-5}, \eqref{f-6} into \eqref{f-3}, we obtain
\begin{equation*}
\frac{\d}{\d t}\E\e^{\delta|X_t|}\leq  M\delta+K_3(\delta) \E\e^{\delta|X_{t}|},
\end{equation*}
with $K_3(\delta)=K_1(\delta)+K_2(\delta)-c\delta$. By the arbitrariness of $c$, we can take $\delta>0$ sufficiently small  and $c>0$ large enough so that $K_3(\delta)<0$.
Applying Gronwall's inequality, we obtain
\begin{equation}\label{f-7}
\E\e^{\delta|X_t|}\leq \E\e^{\delta|X_0|}\e^{K_3(\delta) t}+M\delta\int_0^t\e^{K_3(\delta)(t-s)}\d s.
\end{equation}
The negativeness of $K_3(\delta)$ implies that
\begin{equation}\label{f-8}
\sup_{t\geq 0} \E\e^{\delta|X_t|}<\infty
\end{equation}
if $\E\e^{\delta|X_0|}<\infty$. In particular, we take $(X_0,\La_0)$ starting from some fixed point $(x_0,i)$, and the positive recurrence of $(X_t,\La_t)$ yields that
the distribution of $(X_t,\La_t)$ weakly converges to its stationary distribution $\pi$. Together with the estimate \eqref{f-8}, we finally get
\[\sum_{j\in \S}\int_{\R}\e^{\delta |x|}\pi(\d x, j)\leq \liminf_{t\to \infty} \E\e^{\delta |X_t|}<\sup_{t>0}\E\e^{\delta |X_t|}<\infty,
\] which is the desired conclusion.

Second, we proceed to show $\pi$ is heavy-tailed under conditions \eqref{a-4} and $\max_{i\in\S}\alpha_i>0$. For $p>2\vee \kappa$, thanks to It\^o's formula,
\begin{equation}\label{g-1}
\begin{split}
  &|X_t|^p-|X_0|^p\\
  &=\int_0^t\!\Big(p|X_{s-}|^{p-1}\sgn(X_{s-})
  (\alpha_{\La_{s-}}X_{s-}+b\sigma_{\La_{s-}})+\frac{ap(p-1)}{2}
  \sigma_{\La_{s-}}^2|X_{s-}|^{p-2}\Big)\d s\\
  &\quad+\!\int_0^t\!\int_{z\neq 0}\!\!\big(|X_{s-}\!+\!\sigma_{\La_{s-}}\!z|^p\!-\!|X_{s-}|^p\!-\!
  p\sigma_{\La_{s-}}z\,\sgn(X_{s-})|X_{s-}|^{p-1}\mathbf{1}_{0<|z|<1}\!\big)\nu(\d z)\d s\\
  &\quad+\!\int_0^t\!\!p\sqrt{a}\sigma_{\La_{s-}}\!\sgn(X_{s-})|X_{s-}|^{p-1}\d B_s\!+\!\int_0^t\!\!\int_{z\neq 0}\!\!\big(|X_{s-}\!\!+\!\sigma_{\La_{s-}}\!z|^p\!-\!|X_{s-}|^p\big)\wt N(\d s,\d z).
\end{split}
\end{equation}
Given the initial distribution $\gamma$ of $(X_0,\La_0)$, let $\F^\La=\sigma\{\La_s; 0\leq s<\infty\}$ and
\begin{equation}\label{g-2}
G_p^\gamma(t)=\E\big[|X_{t-}|^p\big|\F^\La\big],\quad t\geq 0,
\end{equation}
where the superscript $\gamma$ in $G_p^\gamma(t)$ emphasizes the initial distribution $\gamma$.
By virtue of the independence of $(\La_t)$ and $(Z_t)$, taking conditional expectation with respect to $\F^\La$ yields that
\begin{equation}\label{g-3}
\frac{\d G_p^\gamma(t)}{\d t}=p\alpha_{\La_{t-}}G_p^\gamma(t)+I(t),
\end{equation}
where $I(t)=I_1(t)+I_2(t)$ with
\begin{align*}
  I_1(t)&:=pb\sigma_{\La_{t-}}\E\big[\sgn(X_{t-})|X_{t-}|^{p-1}\big|\F^\La\big]
  +\frac{ap(p-1)\sigma_{\La_{t-}}^2}{2}G_{p-2}^\gamma(t),\\
  I_2(t)&:=\E\Big[\!\int_{z\neq 0}\!\!\big(|X_{t-}\!+\!\sigma_{\La_{t-}}\!z|^p\!-\!|X_{t-}|^p\!-\!
  p\sigma_{\La_{t-}}z\,\sgn(X_{t-})|X_{t-}|^{p-1}\mathbf{1}_{0<|z|<1}\!\big)\nu(\d z)\Big|\F^\La\Big].
\end{align*}
For $\theta>0$, $\veps>0$, and $0<r<p$, by H\"older's inequality and the following elementary inequality
\[ \theta x^{\frac rp}\leq \veps x+c_0,\quad x>0,\quad \text{with $c_0=\big(\veps/\theta\big)^{\frac{-r}{p+r}}$},\]
we have
\begin{equation}\label{g-4}
\theta G_r^{\gamma}(t)\leq \theta G_p^\gamma(t)^{\frac{r}p}\leq \veps G_p^\gamma(t)+c_0.
\end{equation}
Therefore, for any $\veps>0$ there exists a constant $C_1>0$ independent of $t$ such that
\begin{equation}\label{g-5}
|I_1(t)|\leq p\sigma_\ast |b|G_{p-1}^\gamma(t)+\frac{ap(p-1)\sigma_\ast^2}{2}G_{p-2}^\gamma(t)\leq \frac{\veps}{2}G_p^\gamma(t)+C_1.
\end{equation}

To estimate the term $I_2(t)$, we note that the mean value theorem implies
\[ |x+z|^p-|x|^p=p(\vartheta |x+z|+(1-\vartheta)|z|)^{p-1}(|x+z|-|x|), \quad x,z\in\R\]
for some $\vartheta\in [0,1]$ depending on $x,\,z$, and further
\begin{equation}\label{g-6}
\begin{split}
\big||x+z|^p-|x|^p\big|&\leq p\max\big\{|x+z|^{p-1}, |z|^{p-1}\big\}|z|\\
&\leq p\max\big\{2^{p-2}(|x|^{p-1}+|z|^{p-1}), |z|^{p-1}\big\}|z|.
\end{split}
\end{equation}
Besides, conditions \eqref{a-1} and \eqref{a-4} imply that for every $n\geq 1$,
$\int_{z\neq 0} |z|^n\nu(\d z)<\infty$. Finally, invoking \eqref{g-4} and \eqref{g-6}, we obtain that there exists some $C_2>0$ independent of $t$ such that
\begin{equation}\label{g-7}
|I_2(t)|\leq \frac{\veps}{2}G_p^\gamma(t)+C_2.
\end{equation}
Inserting the estimates \eqref{g-5}, \eqref{g-7} into \eqref{g-2}, we get
\begin{equation}\label{g-8}
\frac{\d G_p^\gamma(t)}{\d t}\geq (p\alpha_{\La_{t-}}-\veps)G_p^\gamma-M,
\end{equation}
where $M=C_1+C_2>0$.
Then
\begin{equation}\label{g-9}
\begin{split}
  G_p^\gamma(t)&\geq G_p^\gamma(0)\e^{\int_0^t(p\alpha_{\La_{s-}}+\veps)\d s}-M\int_0^t\e^{\int_s^t(p\alpha_{\La_{r-}}+\veps)\d r}\d s\\
  &\geq G_p^\gamma(0)\e^{\int_0^t(p\alpha_{\La_{s-}}+\veps)\d s}-M\int_0^t\e^{(p\max_{i\in\S}\alpha_i +\veps)(t-s)}\d s\\
  &=G_p^\gamma(0)\e^{\int_0^t(p\alpha_{\La_{s-}}+\veps)\d s}+\frac{M}{p\max_{i\in\S}\alpha_i+\veps}\big(\e^{(p\max_{i\in\S}\alpha_i+\veps)t}-1\big).
\end{split}
\end{equation}

We shall prove the statement (ii) by contradiction. Assume that $\sum_{i\in\S}\int_{\R}|x|^p\pi(\d x,i)<\infty$. Now let us take $\gamma$ in \eqref{g-2} to be the stationary distribution $\pi$ of $(X_t,\La_t)$, so for every $t>0$, the distribution of $(X_t,\La_t)$ is also $\pi$. Since $(\La_t)$ is an irreducible Markov chain in a finite state space, denote by $\mu$ its stationary distribution on $\S$. Then for every $i\in \S$, $\mu_i>0$. Moreover, it holds $\mu_i=\pi(\R,i)$ which can be seen by the weak convergence of the semigroup of $(X_t,\La_t)$ to $\pi$ and the semigroup of $(\La_t)$ to $\mu$. Consequently, we have
\begin{equation}\label{g-10}
\min_{i\in\S}\int_{\R}|x|^p\pi(\d x,i)>0.
\end{equation}
Taking expectation on both sides of \eqref{g-9}, we obtain
\begin{equation}\label{g-11}
\begin{split}
&\sum_{i\in\S}\int_{\R}|x|^p\pi(\d x,i)=\E|X_t|^p\\
&\geq \Big(\min_{i\in\S}\int_{\R}|x|^p\pi(\d x,i) \Big) \E\e^{\int_0^t(p\alpha_{\La_{s-}}+\veps)\d s}+\frac{M}{p\max_{i\in\S}\alpha_i+\veps}\big(\e^{(p\max_{i\in\S}\alpha_i+\veps)t}-1\big)\\
&\geq c_p\Big(\min_{i\in\S}\int_{\R}|x|^p\pi(\d x,i) \Big)\e^{(-\eta_p+ \veps) t}+\frac{M}{p\max_{i\in\S}\alpha_i+\veps}\big(\e^{(p\max_{i\in\S}\alpha_i+\veps)t}-1\big),
\end{split}
\end{equation}
where $c_p$ is a positive constant, $\eta_p<0$ is defined by \eqref{eta}, and in the last step we have used \cite[Proposition 4.1]{BGM10}.
Choosing $\veps>0$ such that $\eta_p+\veps<0$, and letting $t\to\infty$ in \eqref{g-11}, the left-hand side is finite, but the right-hand side goes to $\infty$, which is a contradiction. Therefore, the $p$-th moment of $\pi$ is infinite, and hence $\pi$ is heavy-tailed.
\fin

\end{document}